\newtheorem{theorem}[subsection]{Theorem}
\newtheorem{lemma}[subsection]{Lemma}
\newtheorem{proposition}[subsection]{Proposition}
\theoremstyle{definition}
\newtheorem{notation-convention}[subsection]{Notations
and Conventions}
\newtheorem{remark}[subsection]{Remark}
\begin{document}
\title{Noncommutative multiplicative norm identities for the quaternions and the octonions}
\author{An Huang\footnote{anhuang@berkeley.edu}\\
Department of Mathematics\\
University of California, Berkeley\\
CA 94720-3840 USA\\}
\date{February 2011}
\maketitle
\begin{abstract}
We present Capelli type identities associated with the quaternions and the octonions, which are noncommutative versions of multiplicative norm identities for the quaternions and the octonions.
\end{abstract}
\section{Introduction}
We found the following identity in the Weyl algebra $\mathbb{Q}[x_1,x_2,x_3,x_4,\frac{\partial}{\partial x_1},\frac{\partial}{\partial x_2},\frac{\partial}{\partial x_3},\frac{\partial}{\partial x_4}]$:

Let $A=\begin{bmatrix}-x_1&x_2&x_3&x_4\\-x_2&-x_1&x_4&-x_3\\-x_3&-x_4&-x_1&x_2\\-x_4&x_3&-x_2&-x_1\end{bmatrix}$, 
and let $B$ to be the transpose of $A$, where one substitutes each $x_i$ by $\frac{\partial}{\partial x_i}$.

Then we have
\begin{equation}\label{1}
\det A\det B=\det(AB+\text{diag})
\end{equation}
where det on the right hand side means column-determinant, (one may see \cite{toru} for the definition, or wiki Capelli's identity.) and $\text{diag}=\begin{bmatrix}2&0&0&0\\0&0&0&0\\0&0&0&0\\0&0&0&-2\end{bmatrix}$,  $\begin{bmatrix}2&0&0&0\\0&0&0&0\\0&0&-2&0\\0&0&0&0\end{bmatrix}$, or $\begin{bmatrix}0&0&0&0\\0&2&0&0\\0&0&0&0\\0&0&0&-2\end{bmatrix}$.

It is easy to see that $\det(A)=(x_1^2+x_2^2+x_3^2+x_4^2)^2$. Note that if entries of $A$ and $B$ were commuting, we would not have the additional diagonal term on the right hand side of \eqref{1}, and the identity would become the square of the usual multiplicative norm identity $N(\alpha)N(\overline{\beta})=N(\alpha\overline{\beta})$ for the quaternions, where $N$ is the norm function, and $\overline{.}$ denotes conjugation in $\mathbb{H}$. 

\eqref{1} can be proved by direct calculation, although the calculation could be quite long. In the following, we describe some background of this identity and its discovery. Then we explore closer its relation with normed division algebra over $\mathbb{R}$, and state these identities in a conceptually clearer way (theorem 1.3), for which a generalization to the octonionic case is clear to guess. Section 2 is devoted to the proof of proposition 1.1.

\eqref{1} is a Capelli type identity. The subject of Capelli identity and it's generalizations has a long history. Some classical references are \cite{roger}, \cite{toru}, \cite{HW}. However, we found \eqref{1} somewhat by accident when we were mainly concerned with the problem of applications of Bernstein-Sato polynomials in perturbative quantum field theory. As it is implicitly mentioned in \cite{reb}, finding quick ways to calculate the Bernstein-Sato polynomial for invariant polynomials of the orthogonal group can be practically very useful in carrying out renormalizations. On the other hand, the classical Capelli identity provides an example for which such a goal is achieved. Namely, for the polynomials $\det(x_{ij})_{1\leq i,j\leq n}$. Therefore we were trying to generalize this idea, to see if we could find similar identities for some other invariant polynomials of the orthogonal group. Then we arrived at \eqref{1} first as a conjecture, then as a theorem. However, \eqref{1} is not of any use for our original purpose, instead, we think it is interesting in its own. 

To get an idea on $\text{diag}$, consider the usual left action of $SO(4)$ on the vector space of polynomials in $x_1,x_2,x_3,x_4$. (The restriction to degree one polynomials gives the standard representation.) Non-diagonal entries in $AB$ are differential operators which are in the image of the Lie algebra $Lie(SO(4))$. Therefore, they act as zero on any invariant polynomial of $SO(4)$. Take the homogeneous invariant polynomial $P(x_1,x_2,x_3,x_4)=(x_1^2+x_2^2+x_3^2+x_4^2)^2$. Pick any complex number $s$ with $Re(s)>0$, let us act on the distribution $P^{s+1}$ using differential operators from both sides of \eqref{1}. Since $\det{B}$ is equal to the square of the Laplace operator in $R^4$, one directly calculates that the action of the left hand side gives a scalar multiple by $(4s+6)(4s+4)^2(4s+2)$, whereas diagonal terms in $AB$ act as scalar multiplications by $(4s+4)$. From this one sees that in order for \eqref{1} to hold, one has to add a permutation of $2,0,0,-2$ to the diagonal of $AB$.

Furthermore, we will prove the following in section 2:
\begin{proposition}
If a $n\times n$ matrix $A$ has the following properties\\
(i)Every entry of $A$ is one of $x_n,x_{n-1},...,x_1,-x_1,...,-x_{n-1},-x_n$
(elements in a polynomial ring), and up to signs, every row and column of
A is a permutation of ${x_1,x_2,...,x_n}$.\\
(ii) For every $2\times 2$ submatrix $\begin{bmatrix}a&b\\c&d\end{bmatrix}$ of $A$, if $a=d$, then $b=-c$, and if
$a=-d$, then $b=c$. Vise versa.\\
(iii) Diagonal terms are all equal.

Then it comes from a multiplication table of a normed division algebra over $\mathbb{R}$
of dimension $n$. In particular, $n=1,2,4$, or $8$.

Conversely, any multiplication table of $\mathbb{H}$ or $\mathbb{O}$ gives such a matrix.
\end{proposition}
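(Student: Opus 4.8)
The plan is to read $A$ as the matrix of left multiplication by a generic element $x = x_1 e_1 + \cdots + x_n e_n$ on $V = \mathbb{R}^n$, to recover a bilinear product from this matrix, and then to show that this product makes $V$ a Euclidean normed division algebra; the classification of such algebras (Hurwitz's theorem) then forces $n \in \{1,2,4,8\}$ and identifies $V$ with $\mathbb{R}$, $\mathbb{C}$, $\mathbb{H}$ or $\mathbb{O}$. First I would define the product: declaring $x * e_j := \sum_k A_{kj} e_k$ and reading off the coefficient of each $x_i$, condition (i) guarantees that in column $j$ the variable $x_i$ occurs in exactly one row, so $e_i * e_j = \pm e_{m}$ for a single basis index $m = m(i,j)$ with a sign determined by $A$. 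Extending bilinearly gives a well-defined algebra whose multiplication table consists of signed basis vectors, as the statement requires.

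Second, and this is the crux, I would prove the norm identity $A A^{T} = (x_1^2 + \cdots + x_n^2)\, I$. The diagonal entries are immediate from (i), since each row squares and sums to $\sum_i x_i^2$. For an off-diagonal entry $\sum_j A_{kj} A_{lj}$ with $k \neq l$, I note first that within a single column the underlying variables are distinct, so every product $A_{kj}A_{lj}$ has the form $\pm x_a x_b$ with $a \neq b$ and no squares occur. Fixing $a \neq b$, at most two columns can contribute $x_a x_b$: the column $j_1$ carrying $x_a$ in row $k$ and $x_b$ in row $l$, and the column $j_2$ carrying $x_b$ in row $k$ and $x_a$ in row $l$. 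Applying (ii) to the $2\times 2$ submatrix on rows $\{k,l\}$ and columns $\{j_1, c\}$, where $c$ is the column holding $x_a$ in row $l$, forces $x_b$ into row $k$ at column $c$; hence whenever $j_1$ exists so does $j_2$, and conversely. Applying (ii) once more to the submatrix on rows $\{k,l\}$, columns $\{j_1,j_2\}$ — whose two diagonal entries are $\pm x_a$ and whose two antidiagonal entries are $\pm x_b$ — shows that the product of the four signs equals $-1$, so the contributions from $j_1$ and $j_2$ cancel. Summing over all pairs $\{a,b\}$ kills the off-diagonal entry. This is the step I expect to be the main obstacle, as it is where the combinatorial content of (ii) is really used. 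From $A A^T = N(x) I$ (hence also $A^T A = N(x)I$) I then read off $N(x * v) = N(x) N(v)$ for the positive-definite form $N(v) = \sum v_i^2$.

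Finally I would produce an identity element and invoke the classification. Condition (iii) says the diagonal of $A$ is a constant $\eta x_d$ with $\eta = \pm 1$; comparing coefficients shows left multiplication by $e_d$ equals $\eta \cdot \mathrm{id}$, so $u := \eta e_d$ is a left identity with $N(u)=1$. Using the standard linearizations of $N(x*v)=N(x)N(v)$ one checks that $u$ is in fact a two-sided identity, and multiplicativity of the positive-definite $N$ rules out zero divisors, so $V$ is a normed division algebra. By Hurwitz's theorem the only such algebras over $\mathbb{R}$ are $\mathbb{R}$, $\mathbb{C}$, $\mathbb{H}$, $\mathbb{O}$, giving $n \in \{1,2,4,8\}$ and realizing $A$ as a left-multiplication, hence multiplication-table, matrix.

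For the converse I would directly verify (i)--(iii) for $\mathbb{H}$ and $\mathbb{O}$. With respect to an orthonormal basis $1 = e_1, e_2, \dots, e_n$ of imaginary units, the fact that multiplication by a unit permutes the basis up to sign gives (i), the real part acting as a scalar gives the constant diagonal (iii), and the relations $e_i^2 = -1$ together with $e_i e_j = - e_j e_i$ for distinct imaginary units yield exactly the diagonal/antidiagonal sign correlation demanded by (ii) for every $2 \times 2$ submatrix. Non-associativity in the octonionic case causes no difficulty here, since only the sign pattern of products of units is needed.
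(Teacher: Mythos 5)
Your route is genuinely different from the paper's, and its core computation is sound and in some ways cleaner: the paper normalizes $A$ to the form $[-X,U_2X,\dots,U_nX]$, adjoins a unit by fiat ($x_1=1$, $x_ix_j=A_{ij}$ for $i,j\neq 1$), and excludes zero divisors through the anticommutation lemma $U_iU_j=-U_jU_i$ and an orthogonality argument, whereas your combinatorial proof of $AA^T=(\sum_i x_i^2)I$ from (i) and (ii) alone delivers both norm multiplicativity and (since $N$ is positive definite) the absence of zero divisors in one stroke. I checked the pairing and cancellation argument for the off-diagonal entries and it is correct. However, there is a genuine gap at the step where you claim the left identity $u=\eta e_d$ is automatically two-sided. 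A composition algebra with a left identity need not have a two-sided one: on $\mathbb{H}$ the isotope $x*y=\bar{x}y$ satisfies $N(x*y)=N(x)N(y)$ and has left identity $1$, but $x*1=\bar{x}\neq x$. Worse, this is not excluded by your hypotheses: the matrix of $y\mapsto\bar{x}y$ in the basis $1,i,j,k$, namely
\begin{equation*}
\begin{bmatrix} x_1 & x_2 & x_3 & x_4 \\ -x_2 & x_1 & x_4 & -x_3 \\ -x_3 & -x_4 & x_1 & x_2 \\ -x_4 & x_3 & -x_2 & x_1 \end{bmatrix},
\end{equation*}
satisfies (i), (ii), (iii), yet under your left-multiplication reading it produces exactly this non-unital algebra, and no change of basis can make it unital because it is not isomorphic to $\mathbb{H}$ as an algebra. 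Condition (iii) controls only the diagonal, i.e.\ the column $d$ positions of $x_d$, not the entire column $d$; you would need $A_{jd}=\eta x_j$ for all $j$ to get a right identity, and that is not forced. The paper avoids this by additionally normalizing an entire row (hence, by antisymmetry, a column) before building the algebra. To close your gap you must either perform that stronger normalization and build the unital algebra from the remaining $(n-1)\times(n-1)$ block as the paper does, or invoke the non-unital version of the classification (every finite-dimensional real composition algebra with positive definite norm is an isotope $x*y=(R_a^{-1}x)(L_a^{-1}y)$ of a unital one, so the dimension is still $1,2,4,8$); the bare linearization identities of $N$ will not do it.

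A smaller but real issue is the converse. For $\mathbb{O}$, condition (ii) applied to a $2\times 2$ submatrix whose four labels $j_1,j_2,i_1,i_2$ are distinct imaginary units is not a consequence of $e_i^2=-1$ and anticommutativity of distinct units alone: one must show that $j_1i_1=\pm j_2i_2$ forces $j_1i_2=\mp j_2i_1$, which relates products of four different units. The paper derives this from alternativity via the Moufang identity, computing $(j_1i_1)(j_1i_2+j_2i_1)=0$ and using the absence of zero divisors; some such use of the octonions' alternativity is unavoidable here, and your sketch omits it.
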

\begin{remark}
This proposition allows one to identify a multiplication table of a normed division algebra over $\mathbb{R}$ with a matrix $A$ satisfying the above purely combinatorial properties.
\end{remark} 

Having this proposition, \eqref{1} Can be formulated more clearly in the following way:
\begin{theorem}
Pick any $4\times 4$ matrix $A$ satisfying the above three properties listed in proposition 1.1. Then it provides $\mathbb{H}$ with an orthonormal basis $e_1=1, e_2, e_3, e_4$. Under this basis, we have
\begin{equation}
\det(L_{\alpha})\det(L_{\overline{\beta}})=\det(L_{\alpha\overline{\beta}}+\text{diag})
\end{equation}
where $\alpha=x_1e_1+x_2e_2+x_3e_3+x_4e_4$, $\beta=\frac{\partial}{\partial x_1}e_1+\frac{\partial}{\partial x_2}e_2+\frac{\partial}{\partial x_3}e_3+\frac{\partial}{\partial x_4}e_4$, $L_{\alpha}$ means the matrix of left multiplication by $\alpha$, under this basis. The determinant on the right hand side means column-determinant, and $\text{diag}$ equals one of $\begin{bmatrix}2&0&0&0\\0&0&0&0\\0&0&0&0\\0&0&0&-2\end{bmatrix}$, $\begin{bmatrix}2&0&0&0\\0&0&0&0\\0&0&-2&0\\0&0&0&0\end{bmatrix}$, and $\begin{bmatrix}0&0&0&0\\0&2&0&0\\0&0&0&0\\0&0&0&-2\end{bmatrix}$, if $e_2e_3e_4=1$, and equals the negative of one of these matrices, if $e_2e_3e_4=-1$.
\end{theorem}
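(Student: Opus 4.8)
The plan is to recognize that the identity of Theorem 1.3 is nothing but equation \eqref{1} transported to the basis determined by $A$, and then to exploit the basis-independence of its left-hand side. First I would use Proposition 1.1 to pin down the precise relation between $A$ and left multiplication: properties (i)--(iii) force the diagonal of $A$ to be $\pm x_1$ (the real part) and its off-diagonal part to be skew, which is exactly the shape of $\pm L_\alpha$ once we normalize so that the identity of $\mathbb{H}$ is $e_1=1$. Concretely I would check, column by column, that the $j$-th column of $A$ records the coordinates of $\pm\alpha e_j$, so that $A=\pm L_\alpha$. Since the adjoint of left multiplication on $\mathbb{H}$ with respect to the standard inner product is $L_\alpha^{\top}=L_{\overline\alpha}$, transposing and substituting $x_i\mapsto\partial/\partial x_i$ gives $B=\pm L_{\overline\beta}$, and hence $AB=L_\alpha L_{\overline\beta}$ with the two signs cancelling.

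Second, I would observe that because $\mathbb{H}$ is associative and every $x_i$ stands to the left of every $\partial/\partial x_j$ in the product $AB$, the matrix identity $L_\alpha L_{\overline\beta}=L_{\alpha\overline\beta}$ holds on the nose, where $\alpha\overline\beta=\sum_{i,j}x_i\frac{\partial}{\partial x_j}(e_i\overline{e_j})$. The left-hand side of \eqref{1} is then manifestly basis-independent: the entries of $L_\alpha$ commute, so its column-determinant is the genuine determinant $N(\alpha)^2=(x_1^2+x_2^2+x_3^2+x_4^2)^2$, and likewise $\det B=N(\beta)^2=\Delta^2$, the square of the Laplacian. Both $\sum x_i^2$ and $\Delta$ are invariant under orthogonal changes of the $x$-coordinates, so the product $\det(L_\alpha)\det(L_{\overline\beta})$ takes the same value for every basis arising from a valid $A$.

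Third, having fixed the left-hand side, the theorem reduces to showing that $\det_{\mathrm{col}}(L_{\alpha\overline\beta}+\mathrm{diag})$ equals this common value for a suitable $\mathrm{diag}$. I would split $L_{\alpha\overline\beta}=E\cdot I+L_{\mathrm{Im}(\alpha\overline\beta)}$, where $E=\sum_i x_i\frac{\partial}{\partial x_i}$ comes from $\mathrm{Re}(\alpha\overline\beta)=\langle\alpha,\beta\rangle$ and the skew part $L_{\mathrm{Im}(\alpha\overline\beta)}$ is built from the angular-momentum operators $x_i\frac{\partial}{\partial x_j}-x_j\frac{\partial}{\partial x_i}$ lying in the image of $Lie(SO(4))$. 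Any two valid matrices $A$ differ by a signed permutation of $e_2,e_3,e_4$ fixing $e_1=1$, which acts on the $x$-coordinates by the corresponding signed permutation and simultaneously conjugates $L_{\alpha\overline\beta}$; tracking this action shows that the admissible $\mathrm{diag}$ is carried into the orbit of the standard one, with the overall sign governed by the orientation $e_2e_3e_4=\pm1$, equivalently by the determinant of the signed permutation. With \eqref{1} established for the standard $A$ by the direct computation promised in Section 2, transporting it along these symmetries yields the stated identity in full generality.

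The main obstacle I anticipate is this last step: the column-determinant is not invariant under conjugation when the entries fail to commute, so I cannot simply declare $\det_{\mathrm{col}}(L_{\alpha\overline\beta}+\mathrm{diag})$ basis-independent and must control precisely how the noncommutative correction interacts with each signed permutation. Getting the bookkeeping of the diagonal term right---in particular explaining why exactly the three displayed matrices and their negatives occur, rather than an arbitrary traceless permutation of $2,0,0,-2$---is where the real work lies, and it is here that the representation-theoretic computation on $P^{s+1}$ sketched after \eqref{1}, which fixes the trace and the scalar by which each side acts, supplies the decisive constraint.
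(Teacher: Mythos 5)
Your plan is a genuinely different route from the paper's (the paper proves Theorem 1.3 by direct computation, via Macaulay2 or a hand calculation organized by the commutation relations), but as written it has a gap at exactly the point you yourself flag as ``where the real work lies,'' and that point is the entire content of the theorem. Three specific problems. First, the base case is not available for free: Section 2 proves Proposition 1.1, not identity \eqref{1}; identity \eqref{1} is only asserted to follow from a direct calculation, and that calculation \emph{is} the paper's proof of Theorem 1.3, so deferring to it is circular. Second, the transport step is never carried out. Conjugating by a signed permutation matrix sends $\det_{\mathrm{col}}(M)=\sum_\sigma \mathrm{sgn}(\sigma)M_{\sigma(1)1}\cdots M_{\sigma(n)n}$ to $\sum_\rho \mathrm{sgn}(\rho)\prod_{j}M_{\rho\tau(j)\tau(j)}$, i.e.\ the same monomials multiplied in a permuted order; since $x_i$ and $\partial/\partial x_i$ do not commute, this is not the same operator, and the discrepancy is of exactly the same size as the $\mathrm{diag}$ correction you are trying to track. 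You acknowledge this but do not resolve it, so the claim that all admissible bases are reached from one verified case is unsubstantiated. Third, the computation on $P^{s+1}$ cannot be ``the decisive constraint'': every diagonal entry of $AB$ acts on $P^{s+1}$ by the same scalar $(4s+4)$, so that argument only forces the multiset of diagonal corrections to be $\{2,0,0,-2\}$ and is blind to their placement --- it cannot single out the three displayed matrices from the other nine placements of $2$ and $-2$ on the diagonal, and in any case testing against the one-parameter family $P^{s+1}$ is a necessary condition, not a proof of an identity of differential operators.

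Some of your preliminary reductions are sound and worth keeping: identifying $A=\pm L_\alpha$ and $B=\pm L_{\overline\beta}$ via $L_\alpha^{\top}=L_{\overline\alpha}$, noting that $AB=L_{\alpha\overline\beta}$ with the normal ordering built in, and computing the left-hand side as $(x_1^2+x_2^2+x_3^2+x_4^2)^2\Delta^2$ since the entries of $A$ (and of $B$) commute among themselves. But to complete the argument you must either expand the column-determinant of $AB+\mathrm{diag}$ explicitly for at least one admissible $A$, collecting the lower-order terms produced by $[\partial/\partial x_i,x_i]=1$, or find a symmetry argument that genuinely controls how $\det_{\mathrm{col}}$ changes under reordering of noncommuting factors; neither is done here.
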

\begin{proof}
By direct computation using Macaulay2. One may also do the verification by hand. By making careful use of commutation relations, the computation required can be reduced significantly.
\end{proof}
\begin{remark}
Here we are working in something like "quaternions over a Weyl algebra", in a clear-to-understand sense. Also note that as a corollary, if we take conjugate of $AB$ in \eqref{1}, and change $\text{diag}$ to -$\text{diag}$, \eqref{1} still holds. 
\end{remark}
Note that from the above statement of the identity, it is clear that one should guess a similar result holds for $\mathbb{O}$. (The 1 and 2 dimensional identities of this type are trivial.) $\text{diag}$ for this possible identity is easy to figure out, by the same way as we did for the $4\times 4$ case. This time, diagonal entries for $\text{diag}$ should be a permutation of ${6,4,2,0,0,-2,-4,-6}$. We are not able to verify this identity at the moment, but we have Macaulay2 code for its verification. Given sufficient internal memory and computing power, this identity can be proved or disproved very soon.
\begin{remark}
One may recognize possible relation between diagonal entries of $\text{diag}$, and degrees of fundamental invariants of the Weyl group of $SO(2n)$, as in the case of the classical Capelli identity.
\end{remark}
\section{Proof of proposition 1.1}
\begin{proof}
From properties (ii) and (iii) we know $A$ is skew symmetric. By possibly redefining basis elements, we may assume that the diagonal terms of $A$ are all equal to $-x_1$, and that the first row is $\left[-x_1,x_2,...,x_n\right]$. We define an $\mathbb{R}$ algebra $D$ which, as an $\mathbb{R}$ vector space, is generated by a basis $x_1=1,x_2,...,x_n$, and whose multiplication is given by $x_ix_j=A_{ij}, i,j\neq 1$ on basis elements, and $1$ is the multiplicative identity. We first show that $D$ is an division algebra over $R$. To this end, it suffices to show that it has no zero divisors. Suppose one has $ef=0$, where $e=\sum_{i=1}^{n}a_ix_i$, $f=-b_1x_1+\sum_{i=2}^{n}b_ix_i$ are two elements in $D$. Further assume that $e\neq 0$, we are going to show that $f=0$. We denote the column vectors $\left[a_1 a_2... a_n\right]^T$ and $\left[b_1 b_2... b_n\right]^T$ by $L$ and $R$, respectively. Thus in $D$ we have
\begin{equation}\label{F}
L^TAR=0. 
\end{equation} 
The matrix $A$ has the form $\left[-X, U_2X, U_3X,..., U_nX\right]$, where $X=\left[x_1,x_2,...,x_n\right]^T$, and $U_i$ are skew-symmetric $n\times n$ orthogonal matrices, $i=2,3,...,n$.

We have the following lemmas:
\begin{lemma}
Let $k$ be a field with characteristic not equal to $2$, $M\in k^{m\times m}$, then the following are equivalent:\\
(i) For any $P\in k^{m\times 1}$, $P^TMP=0$\\
(ii)$M^T=-M$
\end{lemma}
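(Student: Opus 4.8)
The plan is to prove the two implications separately, using the elementary but crucial observation that $P^T M P$ is a $1 \times 1$ matrix and therefore equals its own transpose.

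For the direction (ii) $\Rightarrow$ (i), I would begin from the hypothesis $M^T = -M$ and compute, for an arbitrary column vector $P$, that $P^T M P = (P^T M P)^T = P^T M^T P = -P^T M P$. This forces $2\,P^T M P = 0$, and since $\mathrm{char}\,k \neq 2$ one may cancel the factor of $2$ to obtain $P^T M P = 0$. This is the one point at which the characteristic hypothesis genuinely enters.

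For the converse (i) $\Rightarrow$ (ii), I would test the identity on well-chosen vectors. Taking $P = e_i$, the $i$-th standard basis vector, yields $M_{ii} = e_i^T M e_i = 0$, so every diagonal entry of $M$ vanishes. Taking $P = e_i + e_j$ with $i \neq j$ and expanding, the hypothesis gives $M_{ii} + M_{jj} + M_{ij} + M_{ji} = 0$, which, after discarding the already-vanishing diagonal terms, reduces to $M_{ij} = -M_{ji}$. Assembling these relations over all $i,j$ is exactly the statement $M^T = -M$. Note that this direction uses no division and therefore holds over a field of any characteristic.

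There is no genuinely hard step here; the content is entirely formal, and the only thing warranting care is the precise role of the characteristic. I would make sure the write-up flags that the restriction $\mathrm{char}\,k \neq 2$ is needed solely for (ii) $\Rightarrow$ (i), and is not a mere technical convenience: in characteristic $2$ a skew-symmetric matrix is simply a symmetric one, which may carry nonzero diagonal entries and hence fail (i). An equally clean alternative for the second direction, worth mentioning only in passing, is to split $M$ into its symmetric and skew-symmetric parts $\tfrac{1}{2}(M+M^T)$ and $\tfrac{1}{2}(M-M^T)$ and observe that (i) kills the symmetric part; but since that decomposition itself requires $\mathrm{char}\,k \neq 2$, the direct basis-vector argument above is preferable for exhibiting exactly where the hypothesis is and is not used.
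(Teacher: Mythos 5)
Your proof is correct and complete: the transpose-of-a-scalar argument for (ii) $\Rightarrow$ (i) and the test vectors $e_i$, $e_i+e_j$ for (i) $\Rightarrow$ (ii) are exactly the standard argument, and your observation about where $\mathrm{char}\,k\neq 2$ is genuinely needed (only in (ii) $\Rightarrow$ (i), with the $1\times 1$ identity matrix as a counterexample in characteristic $2$) is accurate. The paper itself offers no proof --- it dismisses the lemma as ``well known'' --- so your write-up simply supplies the standard argument being alluded to; there is nothing substantive to compare.
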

\begin{proof}
This is well known.
\end{proof}
\begin{lemma}
$U_iU_j=-U_jU_i$, if $i\neq j$, $2\leq i,j\leq n$.
\end{lemma}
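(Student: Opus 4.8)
The plan is to derive the anticommutation relation from a single structural fact: the columns of $A$, read as vectors of linear forms, are pairwise orthogonal. Since for $2\le j\le n$ the $j$-th column of $A$ equals $U_{j}X$, the $(j,k)$ entry of $A^{T}A$ is $(U_{j}X)^{T}(U_{k}X)=X^{T}U_{j}^{T}U_{k}X$. The first and decisive step is to show that for $j\neq k$ (with $2\le j,k\le n$) this entry is the zero polynomial in $x_{1},\dots,x_{n}$. Granting this, $X^{T}U_{j}^{T}U_{k}X$ vanishes for every real vector $X$, so the preceding lemma (applied with $M=U_{j}^{T}U_{k}$) shows $U_{j}^{T}U_{k}$ is skew-symmetric, i.e. $U_{k}^{T}U_{j}=-U_{j}^{T}U_{k}$; substituting $U_{j}^{T}=-U_{j}$ and $U_{k}^{T}=-U_{k}$ (the $U_{i}$ are skew-symmetric and orthogonal) turns this into $-U_{k}U_{j}=U_{j}U_{k}$, which is exactly the claim.

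Everything therefore reduces to the vanishing of $(A^{T}A)_{jk}=\sum_{i}A_{ij}A_{ik}$ for $j\neq k$, and this is where properties (i) and (ii) enter. By (i), up to sign each column is a permutation of the variables, so I may write the entry of $A$ in row $i$ and column $c$ as $\varepsilon_{c}(i)\,x_{\rho_{c}(i)}$ for a permutation $\rho_{c}$ and a sign $\varepsilon_{c}(i)$. I plan to cancel the sum termwise through the map $i\mapsto i'=\rho_{j}^{-1}(\rho_{k}(i))$, chosen so that column $j$ in row $i'$ carries the same variable that column $k$ carries in row $i$. Applying property (ii) (in its converse form) to the $2\times 2$ submatrix on rows $\{i,i'\}$ and columns $\{j,k\}$: its off-diagonal entries $A_{ik}$ and $A_{i'j}$ share the variable $x_{\rho_{k}(i)}$, so (ii) forces the diagonal entries $A_{ij}$ and $A_{i'k}$ to share the variable $x_{\rho_{j}(i)}$ --- which both confirms that the map is an involution ($\rho_{k}(i')=\rho_{j}(i)$) and pins the four signs down exactly enough to give $A_{ij}A_{ik}=-A_{i'j}A_{i'k}$. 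Thus the two paired terms cancel.

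It remains to rule out fixed points of the pairing, and here property (i) is used a second time: $i'=i$ would force $\rho_{j}(i)=\rho_{k}(i)$, i.e. both columns $j$ and $k$ would carry $\pm x_{\rho_{j}(i)}$ in row $i$, so the variable $x_{\rho_{j}(i)}$ would occur twice in row $i$ --- impossible, since up to sign each row is a permutation of $x_{1},\dots,x_{n}$. Hence the involution is fixed-point-free, the $n$ terms cancel in $n/2$ pairs, and $(A^{T}A)_{jk}=0$. I expect the main obstacle to be precisely this combinatorial core --- verifying that the involution is well defined and that the four sign constraints coming from (ii) combine to reverse the sign of the product (rather than merely matching monomials). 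Once that bookkeeping is done, the passage to the anticommutation relation via the skew-symmetry lemma is purely formal.
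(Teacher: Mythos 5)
Your proof is correct, but it follows a genuinely different route from the paper's. The paper treats this lemma as a direct verification: apply $U_iU_j$ and $U_jU_i$ to each basis vector and check, via property (ii), that the results are negatives of one another (``diagram chasing'', left unwritten and described as ``somewhat subtle''). You instead reduce everything to the statement that distinct columns of $A$ are pairwise orthogonal as vectors of linear forms, so that $X^TU_j^TU_kX$ is the zero polynomial; the preceding lemma then forces $U_j^TU_k$ to be skew-symmetric, and the skew-symmetry of the $U_i$ themselves (asserted in the paper's setup, and itself a consequence of property (ii) applied to submatrices containing the first column) turns this into $U_jU_k=-U_kU_j$. Your combinatorial core is sound: in both cases allowed by property (ii) (off-diagonal entries equal or opposite) one gets $A_{ij}A_{ik}=-A_{i'j}A_{i'k}$ under the pairing $i\mapsto i'=\rho_j^{-1}(\rho_k(i))$, the same application of (ii) shows the pairing is an involution, and property (i) rules out fixed points. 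What your route buys: the unwritten ``subtle'' step is replaced by an explicit sign-reversing involution, and the column orthogonality you establish is exactly the ``crossing terms cancel'' fact the paper later invokes, also without proof, to show that $D$ is a normed algebra --- so your argument does double duty. The mild cost is that you lean on Lemma 1 and on the skew-symmetry and orthogonality of the $U_i$, neither of which the paper's own sketched proof of this particular lemma requires.
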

\begin{proof}
This lemma follows from property (ii) of the matrix $A$, by diagram chasing, (i.e. apply both sides to basis elements.) somewhat subtly. But property (ii) works just in the correct way that ensures this lemma to be true.
\end{proof}
Now let us prove proposition 1.1. \eqref{F} is a statement that the coefficient of each $x_i$ is $0$. Thus it must be true if we replace $X$ by $U_2X$. Then, using lemma 2, we may write the new equation as
\begin{equation}\label{2}
L^TU_2^TAK_2R=0
\end{equation}
where $K_2$ is a diagonal matrix, with diagonal entries ${1,1,-1,...,-1}$.

Replacing $X$ by $U_iX$, we get similar equations. Now let $x_1=-1,x_2=...=x_n=0$, we therefore have $M_LR=0$, where $M_L$ is the matrix consisting of rows $L^T, L^TU_2^TK_2,...,L^TU_n^TK_n$. We shall see that under the usual inner product of $R^n$, these rows are orthogonal to each other, thus $M_L$ is invertible, and $R=0, f=0$. It is easy to see that the first row is orthogonal to every other row. Without loss of generality, it suffices for us to show that the second row is orthogonal to the third row. According to lemma 1, it suffices to show that the matrix $U_2^TK_2K_3U_3$ is skew-symmetric. We denote $S=K_2K_3$. Relabeling $x_i$ and changing sign if necessary, we may assume $U_2x_3=-x_4$. Then by property (ii) of $A$, we have $U_2x_4=-x_3$. Furthermore, $U_3x_4=-U_3U_2x_3=U_2U_3x_3=-U_2x_1=-x_2$, $U_3x_2=-x_4$. From the matrix identity
\begin{equation}
\begin{bmatrix}0&1&0&0\\1&0&0&0\\0&0&0&1\\0&0&1&0\end{bmatrix}\begin{bmatrix}0&0&-1&0\\0&0&0&-1\\1&0&0&0\\0&1&0&0\end{bmatrix}=
\begin{bmatrix}0&0&1&0\\0&0&0&-1\\1&0&0&0\\0&-1&0&0\end{bmatrix}\begin{bmatrix}0&-1&0&0\\1&0&0&0\\0&0&0&1\\0&0&-1&0\end{bmatrix}
\end{equation}
we see that
\begin{equation}
[U_2,S]U_3+[U_3,S]U_2=0
\end{equation}
Which implies that $U_2^TK_2K_3U_3$ is skew-symmetric, by an easy calculation.

Furthermore, properties (i), (ii) show that the multiplication in $D$ preserves the Euclidean norm. (one can convince oneself by using the definition of norm, and property (ii) ensures that all "crossing terms" cancel.) Thus $D$ is a normed division algebra over $\mathbb{R}$.

Lastly, one can verify directly that any multiplication table of $\mathbb{H}$ or $\mathbb{O}$ gives rise to a matrix $A$ with the above two properties. One even has a uniform way of seeing this, by an use of the Moufang identity, as follows:

Suppose we have such a multiplication table. Then property (i) is obviously satisfied. Let us assume that $j_1i_1=j_2i_2$, we are going to show $j_1i_2=-j_2i_1$. Where $j_1, j_2$ are basis elements in the column, and $i_1, i_2$ are basis elements in the row. (Conversely, exactly the same method applies)

We have $(j_1i_1)(j_1i_2+j_2i_1)=(j_2i_2)(j_1i_2)+(j_1i_1)(j_2i_1)=-i_2(j_2j_1)i_2-i_1(j_1j_2)i_1=0$, thus $j_1i_2+j_2i_1=0$ as we are working in a division algebra. In the above, we have used the fact that $\mathbb{H}$ and $\mathbb{O}$ are alternative algebras, for which Moufang identities apply.  
\end{proof}
\section{Acknowledgment}
I thank Richard Borcherds for suggesting the problem on Bernstein-Sato polynomial, thank Toru Umeda for inspiring discussions, and thank Claudiu Raicu for helping me with the Macaulay2 code. I am also grateful to Zhiqi Chen, Roger Howe, and Shaowei Lin for useful correspondences.

\end{document}